\documentclass[11pt]{amsart}

\usepackage[letterpaper,hmargin=1.1in,vmargin=1.2in]{geometry}

\usepackage{amsmath}
\usepackage{amsfonts}
\usepackage{amssymb}
\usepackage{amsthm}

\usepackage{color}
\usepackage[pdftex]{graphicx}
\usepackage{subfigure}
\usepackage{overpic}

\usepackage[colorlinks,citecolor=blue,pagebackref=true,pdftex]{hyperref}
\usepackage{paralist}

\usepackage{caption}

\usepackage[ruled]{algorithm}
\usepackage{algpseudocode}

\parindent=0pt
\parskip=3pt

\newcommand\bset[2]{\bigl\{ {#1}  \,:\,  {#2} \bigr\}}
\newcommand\Bset[2]{\biggl\{ {#1}  :  {#2} \biggr\}}

\newcommand\R{\mathbb{R}}               %
\newcommand\Rsym[1]{\R_\mathsf{sym}^{#1 \times #1}}
\newcommand\Rnn{\R^{n}_{\ge 0}}

\newcommand\invSpc{\mathcal{N}}

\newcommand\affine[1]{{\mathcal{#1}}}
\renewcommand\approx[1]{\widehat{#1}}

\newcommand\defn[1]{\textbf{#1}}

\renewcommand\a{\mathbf{a}}
\newcommand\x{\mathbf{x}}
\newcommand\p{\mathbf{p}}
\newcommand\q{\mathbf{q}}
\newcommand\z{\mathbf{z}}
\renewcommand\r{\mathbf{r}}

\renewcommand\u{\mathbf{u}}
\renewcommand\b{\mathbf{b}}

\newcommand\id{\mathrm{Id}}
\newcommand\relint{\mathrm{relint}}
\renewcommand\int{\mathrm{int}\,}

\newcommand\lin{\mathrm{lin}}

\newtheorem{thm}{Theorem}[section]
\newtheorem{cor}[thm]{Corollary}
\newtheorem{lem}[thm]{Lemma}
\newtheorem{prop}[thm]{Proposition}

\theoremstyle{definition}
\newtheorem{dfn}[thm]{Definition}
\newtheorem{example}[thm]{Example}

\title[Deciding polyhedrality of spectrahedra]{%
Deciding polyhedrality of spectrahedra}

\author{Avinash Bhardwaj}
\address{Avinash Bhardwaj, Department of Industrial Engineering and Operations
Research, UC Berkeley, Berkeley, USA}
\email{avinash@ieor.berkeley.edu}

\author{Philipp Rostalski}
\address{
Philipp Rostalski, Institut f\"ur Medizinische Elektrotechnik, Universit\"at zu L\"ubeck, L\"ubeck, Germany
}
\email{rostalski@ime.uni-luebeck.de}

\author{Raman Sanyal}
\address{Raman Sanyal, Institut f\"ur Mathematik, %
Freie Universit\"at Berlin, Berlin, %
Germany}
\email{sanyal@math.fu-berlin.de}

\date{\today}

\thanks{P.~Rostalski was supported by a Feodor Lynen Scholarship of the
German Alexander von Humboldt foundation.}
\thanks{R.~Sanyal was supported by a Miller Postdoctoral Research Fellowship
at UC Berkeley and by the DFG-Collaborative Research Center,
TRR 109 ``Discretization in Geometry and Dynamics''.}

\keywords{spectrahedron, polyhedron, normal form, joint invariant subspace,
algorithm} 

\subjclass[2000]{Primary 90C22; Secondary 52A27}

\begin{document}

\begin{abstract}
    Spectrahedra are linear sections of the cone of positive semidefinite
    matrices which, as convex bodies, generalize the class of polyhedra.  In
    this paper we investigate the problem of recognizing when a spectrahedron
    is polyhedral.  We generalize and strengthen results of Ramana (1998)
    regarding the structure of spectrahedra and we devise a normal form of
    representations of spectrahedra.  This normal form is effectively
    computable and leads to an algorithm for deciding polyhedrality.
\end{abstract}

\maketitle

\section{Introduction}\label{sec:intro}

A \defn{polyhedron} $\affine{P}$ is the intersection of the convex cone of
non-negative vectors $\Rnn$ with an affine subspace. By choosing
an affine basis for the subspace, we obtain a representation
\[
    \affine{P} \ = \ \bset{\x \in \R^{d-1}}{b_i - \mathbf{a}_i^T\x \ \ge \ 0 \,
    \text{ for } i = 1, 2, \dots, n }
\]
for some $\a_1,\a_2,\dots,\a_n \in \R^{d-1}$ and $b_1,b_2,\dots,b_n \in \R$.
Polyhedra represent the geometry underlying linear
programming~\cite{ziegler95} and, as a class of convex bodies, enjoy 
considerable interest throughout pure and applied mathematics. A proper
superclass of convex bodies that inherits many of the favorable properties of
polyhedra is the class of spectrahedra.  

A \defn{spectrahedron} $\affine{S}$ is the intersection of the convex cone of
positive semidefinite matrices with an affine subspace. Identifying the affine
subspace with $\R^{d-1}$ we write 
\begin{equation}\label{eqn:aff-spec}
    \affine{S} \ = \ \bset{\x \in \R^{d-1}}{x_1 A_1 + \cdots + x_{d-1}
    A_{d-1} + A_d \ \succeq \ 0 }
\end{equation}
where $A_1, A_2,\dots, A_d \in \R^{n \times n}$ are symmetric matrices.  Thus,
a spectrahedron is to a semidefinite program, what a polyhedron is to a linear
program. The associated map $A : \R^{d-1} \rightarrow \Rsym{n}$ given by
$A(\x) = x_1A_1 + \cdots + x_{d-1} A_{d-1} + A_d$ is called an affine
(symmetric) \defn{matrix map}. A symmetric matrix $A \in \R^{n \times n}$ is
\defn{positive semidefinite} $A \succeq 0$ if $v^T A v \ge 0$ for all $v \in
\R^n$. Hence, the set of points $\affine{S} \subseteq \R^{d-1}$ at which
$A(\x)$ is positive semidefinite is determined by a (quadratic) family of
linear inequalities
\[
    l_v(\x) \ := \ v^TA(\x)v \ = \  x_1\, v^TA_1v \ + \
    x_2\, v^TA_2v \ + \
    \cdots \ + \
    x_{d-1}\, v^TA_{d-1}v  \ + \ v^TA_dv \ \ge \  0.
\]
for $v \in \R^n$.

Spectrahedra and their projections have received considerable attention in
the geometry of semidefinite optimization~\cite{pat00}, polynomial optimization~\cite{gpt10},
and convex algebraic geometry~\cite{hv07}. To see that polyhedra are
spectrahedra, observe that a diagonal matrix is positive semidefinite if and
only if the diagonal is non-negative. Thus, we have
\[
    \affine{P} \ = \ \bset{ \x \in \R^{d-1}}{ D(\x) \succeq 0 }
\]
where $D(\x) = \mathrm{Diag}(b_1-\a^T_1\x,\dots,b_n-\a_n^T\x)$ is a diagonal
matrix map.

It is a theoretically interesting and practically relevant question to
recognize when a spectrahedron is a polyhedron.  The diagonal embedding of
$\Rnn$ into the cone of positive semidefinite matrices suggests that a
spectrahedron is a polyhedron if $A(\x)$ can be diagonalized, i.e.,
$UA(\x)U^{-1}$ is diagonal for some orthogonal matrix $U$.  By basic linear
algebra this is possible if and only if $A(\p)$ and $A(\q)$ commute for all
$\p,\q \in \R^{d-1}$. While this is certainly a sufficient condition, observe
that by Sylvester's law of inertia $\affine{S} = \bset{\x}{LA(\x)L^T \succeq
0}$ for any non-singular matrix $L$. In general, however, matrices in the
image of $LA(\x)L^T$ will not commute; see Example~\ref{ex:non-orth}.  A more
serious situation is when a polyhedron is \emph{redundantly} presented as the
intersection of a proper `big' spectrahedron and a `small' polyhedron
contained in it. 
\begin{center}
\begin{overpic}[width=8cm]%
    {redundant}
    \put(31,11){$\bigcap$}
    \put(67,11){$=$}
    \put(6,-5){$\scriptstyle A(\x) \succeq 0$}
    \put(45,-5){$\scriptstyle B(\x) \succeq 0$}
    \put(75,-6){$\scriptstyle\left[\begin{smallmatrix}A(\x) \\ &
        B(\x)\end{smallmatrix}\right]
        \succeq 0$}

\end{overpic}
\ \\
\ \\
\end{center}

In this case, the diagonalizability criterion is genuinely lost.

In this paper we consider the question of algorithmically telling polyhedra
from spectrahedra. This question was first addressed by Ramana~\cite{ramana98}
with a focus on the computational complexity. Our results regarding the
structure of spectrahedra strengthen and generalize those of~\cite{ramana98}
and we present a simple algorithm to test if a spectrahedron $S = \bset{ x }{
A(\x) \succeq 0 }$ is a polyhedron.  The algorithm we propose consists of two
main components:

\begin{center}
\begin{tabular}{l@{\;}l@{}l}
    (Approximation)& Calculate polyhedron \ & $\affine{\approx{S}} \
    \supseteq \ \affine{S}$ from $A(\x)$, and\\
    (Containment)& determine whether & $\affine{\approx{S}} \ \subseteq \
    \affine{S}$.
\end{tabular}
\end{center}

Finding a \emph{fast} algorithm is not to be expected:  Ramana~\cite{ramana98}
showed that deciding whether a spectrahedron is polyhedral is NP-hard. As
detailed later, the `Containment' step, which is coNP-hard by the results
in~\cite{ktt12}, is done by enumerating all vertices/rays of $\approx{S}$.
This is clearly not feasible in practice and we make no claim that our
algorithm is suitable for preprocessing semidefinite programs. However, as in
the case of the `vertex enumeration problem' for polyhedra, it is of
considerable interest to have a \emph{practical} algorithm for exploration,
experimentation, and hypothesis testing with spectrahedra. Our motivation
arose in exactly this context.
We nevertheless anticipate applications of our algorithm in the area of
(combinatorial) optimization in particular in connection with
\emph{semidefinite extended formulations}\footnote{In this context it is also
of interest to detect codimension-one faces of projections of spectrahedra.
However, making statements about projections of spectrahedra is generally more
challenging as they are geometrically less well-behaved and less (algebraic)
information (such as polynomials vanishing on the boundary) is available.}; 
see, for example,~\cite{fmptw,grr12}.  In Section~\ref{sec:algorithm} our
algorithm is discussed in some detail and illustrated along an example. We
close with some remarks regarding implementation and the complexity of the
approximation step.

As for the approximation step, note that if there is a point $\p \in
\affine{S}$ with $A(\p)$ positive definite, then the \emph{algebraic
boundary}, the closure of $\partial \affine{S}$ in the Zariski topology, is
contained in the vanishing locus of $f(\x) = \det A(\x) \not\equiv 0$. Thus,
if $F \subset \affine{S}$ is a face of codimension one, the unique supporting
hyperplane is a component of the algebraic boundary of $\affine{S}$ and hence
yields a linear factor of $f$. Therefore, isolating linear factors in $f$
gives rise to a polyhedral approximation $\affine{\approx{S}}$ of
$\affine{S}$.  However, factoring a multivariate polynomial is computationally
expensive and an alternative is the use of \emph{numerical algebraic geometry}
such as Bertini~\cite{bertini} to isolate the codimension-one components of
degree one (possibly with multiplicities). Our approach avoids calculating the
determinant of the matrix map altogether by pursuing more algebro-geometric
considerations. Ramana~\cite{ramana98} showed that if $\affine{S}$ is a polyhedron,
then the relevant linear factors can be read off a block-diagonal form of
$A(\x)$. The challenge is to find the block-diagonal form.  In
Section~\ref{sec:nform} we recall and strengthen Ramana's results with very
short proofs which highlight the underlying geometry. In particular, our proof
emphasizes the role played by eigenspaces of the matrix map. From this, we
define a normal form with stronger properties and we prove that the polyhedral
approximation can be obtained by essentially computing the \emph{joint
invariant subspace} of two generic points in the image of $A(\x)$. 

{\bf Convention.}~For reasons of clarity and elegance, we will work in a
linear instead of an affine setting. That is, our main objects are exclusively
\emph{spectrahedral cones} and hence all matrix maps are \emph{linear} maps
$\R^d \rightarrow \Rsym{n}$. All results can be translated between
the linear and affine setting. The spectrahedral cone $S$ that we associate to
the spectrahedron $\affine{S}$ above is
\begin{equation}\label{eqn:lin-spec}
S \ = \ \bset{(\x,x_d) \in \R^{d}}{  
     x_1A_1 + x_2A_2 + \cdots + x_dA_d 
    \ \succeq \ 0,
                    x_d \ge 0 }.
\end{equation}
The following proposition shows that it suffices to consider spectrahedral
cones.

\begin{prop}
    The spectrahedron $\affine{S} \neq \varnothing$ given
    in~\eqref{eqn:aff-spec} is a polyhedron if and only
    if the associated spectrahedral cone $S$ given by~\eqref{eqn:lin-spec} is a polyhedral cone.
\end{prop}
\begin{proof}
    Note that for $\alpha > 0$
    \[
        (\x,\alpha) \in S \quad \Longleftrightarrow \quad \x \in \alpha
        \affine{S}.
    \]
    Indeed, $(\x,\alpha) \in S$ if and only if $\frac{1}{\alpha}\x \in
    \affine{S}$. In particular, if $S$ is a polyhedral cone, then $\affine{S}
    \cong S \cap \{(\x,x_d) : x_d = 1\}$ is a polyhedron.

    For the converse statement, we observe that the set 
    \[
        T \ := \ \{ (\x, \alpha) : \alpha > 0, \x \in \alpha \affine{S} \}
    \]
    is a subset of $S$. Since $S$ is closed, the closure $\overline{T}$ is
    contained in $S$ as well. We claim that $\overline{T} = S$. Let $(\x,0)
    \in S$. By convexity of $S$, we can pick a sequence $(\x_n,\alpha_n)_{n
    \ge 0} \in S$ with $\alpha_n > 0$ for all $n$ and $(\x_n,\alpha_n)
    \xrightarrow{n \rightarrow \infty} (\x,0)$. But this is a sequence in $T$
    and therefore $(\x,0) \in \overline{T}$. If $\affine{S}$ is a polyhedron,
    then, by the Minkowski--Weyl theorem (see~\cite[Thm.~1.2]{ziegler95}),
    $\affine{S} = \{ \x : A\x \le b \}$ for some $A \in \R^{n
    \times (d-1)}$ and $b \in \R^n$. But then 
    \[
        T \ = \ \{ (\x,x_d) : x_d > 0, A\x - x_d b \le 0 \}
    \]
    and $\overline{T} = S$ is a polyhedral cone; 
    see also~\cite[Prop.~1.14]{ziegler95}.  
\end{proof}

\textbf{Acknowledgments.} This paper grew out of a project proposed by the
last two authors for the class `Geometry of Convex Optimization' at UC
Berkeley, Fall 2010. We would like to thank Bernd Sturmfels and the
participants of the class for an inspiring environment. We would also like to
thank the referees for carefully reading the paper and their many helpful
suggestions.

\section{Normal forms and joint invariant subspaces}
\label{sec:nform}

Let $S = \bset{\x \in \R^d}{A(\x) \succeq 0}$ be a full-dimensional
spectrahedral cone given by a linear matrix map $A(\x) = x_1A_1 + x_2A_2 +
\cdots + x_dA_d$.  Throughout this section, we will assume that $A(\x)$ is of
full rank, i.e., there is a point $\p \in S$ with $A(\p) \succ 0$. As
explained in the next section, this is not a serious restriction.  We are
interested in the codimension-one faces of $S$ and how they manifest in the
presentation of $S$ given by $A(\x)$. Let us recall the characterization of
faces of a spectrahedral cone.

\begin{lem}[{\cite[Thm.~1]{RG95}}] \label{lem:face_subspace}
    Let $S = \bset{\x}{A(\x) \succeq 0 }$ be a full-dimensional
    spectrahedral cone.  For every face $F \subseteq S$ there is an
    inclusion-maximal linear subspace $\mathcal{L}_F \subset \R^n$ such that
    \[
        F \ = \ \bset{ \p \in S }{ \mathcal{L}_F \subseteq \ker A(\p) }.
    \]
\end{lem}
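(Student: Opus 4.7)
The plan is to produce $\mathcal{L}_F$ explicitly from a relative interior point of $F$ and verify both the characterization and maximality using the elementary lemma on kernels of sums of positive semidefinite matrices.

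First I would pick a point $\p_0 \in \relint F$ and set $\mathcal{L}_F := \ker A(\p_0)$. The central tool is the observation that for any $M_1, M_2 \succeq 0$ one has $\ker(M_1 + M_2) = \ker M_1 \cap \ker M_2$, because $v^T(M_1+M_2)v = 0$ forces $v^T M_i v = 0$ and hence $M_i v = 0$ for PSD $M_i$. This immediately upgrades to convex combinations: if $\p = \lambda \q_1 + (1-\lambda)\q_2$ with $\q_1, \q_2 \in S$ and $\lambda \in (0,1)$, then $\ker A(\p) = \ker A(\q_1) \cap \ker A(\q_2)$.

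Next I would establish the two inclusions. For $F \subseteq \{\q \in S : \mathcal{L}_F \subseteq \ker A(\q)\}$, given $\q \in F$, use that $\p_0 \in \relint F$ to find $\q' \in F$ and $\epsilon > 0$ with $\q' = (1+\epsilon)\p_0 - \epsilon \q$, so $\p_0$ is a genuine convex combination of $\q$ and $\q'$; the kernel identity then gives $\mathcal{L}_F = \ker A(\p_0) \subseteq \ker A(\q)$. For the reverse inclusion, let $\q \in S$ with $\mathcal{L}_F \subseteq \ker A(\q)$. The same line construction expresses $\p_0 \in F$ as a convex combination of $\q$ and $\q'$, both lying in $S$; since $F$ is a face, both endpoints must lie in $F$, so $\q \in F$.

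For the maximality, suppose $\mathcal{L}'$ is any subspace with $F = \{\p \in S : \mathcal{L}' \subseteq \ker A(\p)\}$. Since $\p_0 \in F$, we must have $\mathcal{L}' \subseteq \ker A(\p_0) = \mathcal{L}_F$, so $\mathcal{L}_F$ is inclusion-maximal (in fact uniquely so among subspaces that yield the same face).

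I do not expect any serious obstacle. The only subtle point is making the relative interior argument precise: one has to know that for $\p_0 \in \relint F$ and any $\q \in F$, the segment from $\q$ through $\p_0$ can be extended slightly beyond $\p_0$ while remaining in $F$. This is a standard fact about relative interiors of convex sets and is what forces $\p_0$ to act as an ``interior witness'' in both directions of the proof.
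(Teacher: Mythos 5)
The paper does not prove this lemma itself; it cites it directly from \cite[Thm.~1]{RG95}, so you are supplying a proof where the paper gives none. Your approach — take $\p_0 \in \relint F$, set $\mathcal{L}_F = \ker A(\p_0)$, use the kernel-of-sums fact for positive semidefinite matrices, and run the line-through-$\p_0$ argument in both directions — is the standard argument and is essentially what appears in \cite{RG95}.

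There is, however, a genuine gap in the reverse inclusion. Take $\q \in S$ with $\mathcal{L}_F \subseteq \ker A(\q)$ and set $\q' = (1+\epsilon)\p_0 - \epsilon\q$. You assert that $\q'$ ``lies in $S$'' by ``the same line construction,'' and your closing remark says the only subtle point is the standard relative-interior extension fact, applied ``in both directions.'' But that fact only lets you extend past $\p_0$ for $\q$ lying in $F$ (or at least in the affine hull of $F$); here $\q$ is merely in $S$ — that $\q$ is in $F$ is exactly what you are trying to prove. In general, extending a segment from an arbitrary $\q \in S$ through a point of $\relint F$ leaves $S$ immediately: for $S = \R^2_{\geq 0}$, $F$ the nonnegative $x$-axis, $\p_0 = (1,0)$, $\q = (0,1)$, the extension $(1+\epsilon, -\epsilon)$ is outside $S$ for every $\epsilon>0$. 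What saves the argument is precisely the kernel hypothesis $\mathcal{L}_F \subseteq \ker A(\q)$, which you invoke nowhere in this step: since $A(\p_0)$ and $A(\q)$ both vanish on $\mathcal{L}_F$ and $A(\p_0)$ is positive definite on $\mathcal{L}_F^{\perp}$, the perturbation $(1+\epsilon)A(\p_0) - \epsilon A(\q)$ remains positive definite on $\mathcal{L}_F^{\perp}$ for small $\epsilon>0$ and still vanishes on $\mathcal{L}_F$, hence $A(\q') \succeq 0$ and $\q' \in S$. Once that is supplied, the faceness of $F$ forces $\q \in F$ as you say. Everything else in the proposal — the kernel-of-sums lemma, the forward inclusion, and maximality — is correct.
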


For the case of faces of codimension one, this characterization in terms of
kernels implies strong restrictions on the describing matrix
map.

\begin{thm}\label{thm:expose_face}
    Let $S = \bset{ \x }{ A(\x) \succeq 0}$ be a full-dimensional spectrahedral cone and let $F \subset S$ be a
    face of codimension one. Then there is a non-singular matrix $M \in
    \R^{n\times n}$ such that
    \[
        M A(\x) M^T \;=\;
        \begin{bmatrix}
            A^\prime(\x) &  \\
                  & \ell(\x) \id_k \\
        \end{bmatrix}
    \]
    where $k \ge 1$ and $\ell(\x)$ is a supporting linear form such that $F = \bset{ \x \in
    S }{ \ell(\x) = 0 }$.
\end{thm}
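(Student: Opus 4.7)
The plan is to simultaneously diagonalize the matrix map at two well-chosen points, namely a point in the relative interior of $F$ and a point in the interior of $S$, and then use the characterization of Lemma~\ref{lem:face_subspace} together with positive semidefiniteness to force the off-diagonal structure.

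Concretely, first I would choose $\p \in \relint F$ and $\q \in \int S$, so that $A(\q) \succ 0$ and, by maximality of $\mathcal{L}_F$, $\ker A(\p) = \mathcal{L}_F$. Set $k := \dim \mathcal{L}_F$; since $F$ is a proper face, no $\x \in F$ can have $A(\x) \succ 0$, so $k \ge 1$. Applying simultaneous diagonalization to the definite pencil $A(\q), A(\p)$, I obtain a non-singular $M$ with $M A(\q) M^T = \id_n$ and $M A(\p) M^T$ diagonal. After a permutation, I may assume $M A(\p) M^T = \mathrm{diag}(d_1,\dots,d_{n-k},0,\dots,0)$ with $d_i > 0$, so that $\ker(M A(\p) M^T) = \spn(e_{n-k+1},\dots,e_n) = M^{-T} \mathcal{L}_F$.

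Next I would write the transformed map in block form
\[
    M A(\x) M^T \;=\; \begin{bmatrix} B(\x) & C(\x) \\ C(\x)^T & E(\x) \end{bmatrix}
\]
with $B(\x)$ of size $(n-k)\times(n-k)$. For every $\y \in F$ Lemma~\ref{lem:face_subspace} gives $\mathcal{L}_F \subseteq \ker A(\y)$, so in the new coordinates the last $k$ standard basis vectors lie in $\ker(M A(\y) M^T)$; hence $C(\y) = 0$ and $E(\y) = 0$ for all $\y \in F$. Since each entry of $C$ and $E$ is a linear form vanishing on the codimension-one face $F$, and since $F$ is exposed by a unique (up to scalar) linear form $\ell(\x)$, there exist constants $C_0$ and a symmetric $E_0$ with $C(\x) = \ell(\x)\, C_0$ and $E(\x) = \ell(\x)\, E_0$. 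Evaluating at $\q$ yields $\ell(\q) C_0 = 0$ and $\ell(\q) E_0 = \id_k$, so $\ell(\q) \neq 0$, $C_0 = 0$, and $E_0 = \ell(\q)^{-1}\id_k$.

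Finally, I would absorb the scalar $\ell(\q)^{-1}$ by replacing $M$ with $\mathrm{diag}(\id_{n-k}, \sqrt{\ell(\q)}^{\,-1} \id_k)\cdot M$, which leaves the block decomposition intact and rescales the lower-right block to $\ell(\x)\,\id_k$; this also shows $\ell(\q) > 0$, so $\ell$ is non-negative on $S$ (being a diagonal entry of the PSD matrix $MA(\x)M^T$) and hence a supporting linear form. The only subtle step is extracting both the vanishing of $C(\x)$ and the scalar structure of $E(\x)$ from the single equation $\ell(\x) = 0$; the key is that Lemma~\ref{lem:face_subspace} forces not just the diagonal but the entire last $k$ rows and columns to vanish on $F$, and this is precisely what a simultaneous diagonalization at a relative-interior point $\p$ of $F$ and an interior point $\q$ of $S$ delivers.
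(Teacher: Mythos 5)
Your proof is correct and takes essentially the same route as the paper: apply Lemma~\ref{lem:face_subspace}, pick coordinates sending an interior point $\q$ to $\id$ and aligning $\mathcal{L}_F$ with the last $k$ axes, then use linearity of the matrix map to force the block form (the paper reaches this slightly more directly by expanding $A(\x)$ in a basis $\b_1\in\int S$, $\b_2,\dots,\b_d\in F$, so the $x_1\id_k$ block appears immediately, bypassing your intermediate step $C(\x)=\ell(\x)C_0$, $E(\x)=\ell(\x)E_0$). One small clean-up: fix the sign of $\ell$ so that $\ell(\q)>0$ \emph{before} writing $\sqrt{\ell(\q)}$ in the final rescaling, since as phrased that step presupposes the positivity it is meant to establish.
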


\begin{proof}
    Let $B = (\b_1,\b_2,\dots,\b_d)$ be a basis of $\R^d$ such that $\b_1 \in
    \int S$ and $\b_2,\dots,\b_d \in F$. By applying a suitable congruence, we
    can assume that $A(\b_1) = \id$.  In light of
    Lemma~\ref{lem:face_subspace}, let $U^T = (\u_1,\u_2,\dots,\u_n) \in
    \R^{n\times n}$ be an orthonormal basis of $\R^n$ such that
    $\mathcal{L}_F$ is spanned by $\u_{n-k+1},\dots,\u_n$ with $k = \dim
    \mathcal{L}_F$.
    It is easily seen that
    $U A(B\x) U^T$ is of the form
    \[
        \begin{bmatrix}
            A^\prime(B\x) & \\
            & x_1 \id_k
        \end{bmatrix}.
    \]
    Reverting to the original coordinates ($\x \mapsto B^{-1}\x$)
    replaces $x_1$
    by $\ell(\x)$.
\end{proof}

The form of the matrix map as given in the previous lemma expresses $S$ as the
intersection of a linear halfspace and a spectrahedral cone $S^\prime =
\bset{\x}{A^\prime(\x) \succeq 0}$. Repeating the process for $S^\prime$
proves

\begin{cor}\label{cor:polyhedral_facets}
    Let $S = \bset{\x}{A(\x)\succeq 0}$ be a full-dimensional spectrahedral
    cone. Then there is a non-singular matrix $M \in \R^{n \times n}$ such that
    \begin{equation}\label{eqn:block}\tag{$\star$}
        M A(\x) M^T \;=\;
        \begin{bmatrix}
            Q(\x) &  \\
                  & D(\x) \\
        \end{bmatrix}
    \end{equation}
    where $D(\x)$ is a diagonal matrix map of order $m \ge 0$. Moreover, if $F
    \subset S$ is a
    face of codimension one, then $F = \bset{\x \in S}{ D_{ii}(\x) = 0 }$ for
    some $1 \le i \le m$.\hfill\qed
\end{cor}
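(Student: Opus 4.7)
The plan is induction on the matrix size $n$, with Theorem~\ref{thm:expose_face} as the peeling step. I would bundle the existence of the block form \eqref{eqn:block} together with the moreover clause into a single inductive statement: for any full-dimensional spectrahedral cone defined by a full-rank matrix map of size $n$, the decomposition exists and every codimension-one face is of the form $\{\x \in S : D_{ii}(\x) = 0\}$ for some $i$. The base case is whenever $S$ has no codimension-one face; then $Q(\x) = A(\x)$ and $D(\x)$ is of order $m = 0$.

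For the inductive step, assume $S$ has a codimension-one face $F$ and apply Theorem~\ref{thm:expose_face}: a congruence $M_1$ turns $A(\x)$ into $A'(\x) \oplus \ell(\x)\id_k$ with $F = \{\x \in S : \ell(\x) = 0\}$. The point $\b_1 \in \int S$ used in the proof of Theorem~\ref{thm:expose_face} satisfies $A(\b_1) = \id$, which forces $A'(\b_1) = \id_{n-k}$; hence $A'$ is of full rank and $S' = \bset{\x}{A'(\x) \succeq 0} \supseteq S$ is full-dimensional, so the inductive hypothesis applies to $A'$. It yields a congruence $M_2$ reducing $A'$ to block form with diagonal part $D'(\x)$; composing $M_1$ with $\mathrm{diag}(M_2, \id_k)$ gives \eqref{eqn:block} for $A(\x)$ with overall diagonal part $D(\x) = D'(\x) \oplus \ell(\x)\id_k$.

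For the moreover clause, take any codimension-one face $F^\ast$ of $S$ with exposing linear form $\ell^\ast$. If $F^\ast = F$, it is captured by one of the $\ell(\x)$ entries. Otherwise $F^\ast$ must contain a point $\p^\ast$ with $\ell(\p^\ast) > 0$, since $F^\ast \subseteq F$ would contradict $F$ being itself codimension-one. I would then argue that $\ell^\ast$ also supports the larger cone $S'$: if some $\q \in S'$ satisfied $\ell^\ast(\q) < 0$, the segment from $\q$ to $\p^\ast$ would cross the hyperplane $\{\ell = 0\}$ at a point lying in $S$ but with $\ell^\ast < 0$, contradicting that $\ell^\ast \ge 0$ on $S$. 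Hence $\ell^\ast$ exposes a face $F^\ast_{S'}$ of $S'$ of dimension at least $\dim F^\ast = d - 1$, hence of codimension one. By induction $F^\ast_{S'} = \{\x \in S' : D'_{ii}(\x) = 0\}$ for some $i$, and intersecting with $S$ yields $F^\ast = \{\x \in S : D'_{ii}(\x) = 0\}$ as required.

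The main obstacle is precisely this last convex-geometric step: verifying that every codimension-one face of $S$ other than the one peeled off extends to a codimension-one face of the enlarged cone $S'$. The block decomposition itself is a mechanical iteration of Theorem~\ref{thm:expose_face}, but the enumeration of codimension-one faces is complete only after one checks that dropping the halfspace $\{\ell \ge 0\}$ cannot hide any other codimension-one face of $S$.
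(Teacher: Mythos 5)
Your proof is correct and follows the same route as the paper: iterate Theorem~\ref{thm:expose_face} to peel off codimension-one faces one at a time, terminating when the remaining block is proper. The paper itself disposes of the corollary in one sentence (``Repeating the process for $S'$ proves\dots''), and the real content you have added is the convex-geometric lemma justifying the ``moreover'' clause --- namely that any codimension-one face $F^\ast \ne F$ of $S$ survives as a codimension-one face of the enlarged cone $S' \supseteq S$, so that the induction actually enumerates all of them; this is a genuine gap in the paper's terse argument, and your segment argument (with the minor observation that the case $\ell(\mathbf{q}) \ge 0$ is disposed of immediately since then $\mathbf{q} \in S$ with $\ell^\ast(\mathbf{q}) < 0$) closes it cleanly.
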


If $S$ is a polyhedral cone then all inclusion-maximal faces have codimension
one and hence $S$ is determined by $D(\x)$ alone. This recovers Ramana's
result.

\begin{cor}[{\cite[Thm.~1]{ramana98}}]\label{cor:ramana1}
    Let $S$ be a full-dimensional spectrahedral cone. Then $S$ is polyhedral
    if and only if there is a non-singular matrix  $M \in \R^{n\times n}$ such
    that
    \[
        M A(\x) M^T \;=\;
        \begin{bmatrix}
            Q(\x) &  \\
                  & D(\x) \\
        \end{bmatrix}
    \]
    where $D(\x)$ is a diagonal matrix map and $S \;=\; \bset{ \x }{ D(\x)
    \succeq 0 }$.  \hfill\qed
\end{cor}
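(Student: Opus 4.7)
The plan is to derive this corollary directly from Corollary \ref{cor:polyhedral_facets}. The ``if'' direction is immediate: if $M A(\x) M^T$ has the displayed block form and $S = \{\x : D(\x) \succeq 0\}$, then $S$ is cut out by the finitely many linear inequalities $D_{ii}(\x) \ge 0$, $i = 1, \dots, m$, and is therefore a polyhedral cone.

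For the converse, suppose $S$ is polyhedral. The first step is to apply Corollary \ref{cor:polyhedral_facets} to obtain $M \in \GL_n(\R)$ realizing the block decomposition $M A(\x) M^T = \mathrm{diag}(Q(\x), D(\x))$ with $D(\x)$ diagonal of order $m \ge 0$, together with the ``moreover'' property that every codimension-one face of $S$ is the zero set of some $D_{ii}$. Set $T := \{\x : D(\x) \succeq 0\}$. The inclusion $S \subseteq T$ is automatic, since if $A(\x) \succeq 0$ then $M A(\x) M^T \succeq 0$ and in particular its diagonal block $D(\x)$ is positive semidefinite.

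The remaining task is the reverse inclusion $T \subseteq S$. Here I would invoke the standard fact that a full-dimensional polyhedral cone equals the intersection of the closed halfspaces bounded by its facets. By the moreover clause, each facet of $S$ lies on a hyperplane $\{D_{ii}(\x) = 0\}$ for some $i$, and since $D_{ii}(\x) \ge 0$ holds throughout $S$, the corresponding supporting halfspace is $\{D_{ii}(\x) \ge 0\}$. Consequently $S$ is expressible as $\bigcap_{i \in I} \{D_{ii}(\x) \ge 0\}$ for some $I \subseteq \{1, \dots, m\}$, and this contains $T = \bigcap_{i=1}^{m} \{D_{ii}(\x) \ge 0\}$. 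Combining the two inclusions gives $S = T$, as desired.

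Almost all of the content has been absorbed into Corollary \ref{cor:polyhedral_facets}, so I do not expect any genuine obstacle; the argument is a short bookkeeping step. The only edge case worth mentioning is $m = 0$, which forces $S$ to have no facets and hence $S = \R^d$; the statement then holds trivially with $D$ empty and $\{D(\x) \succeq 0\} = \R^d$.
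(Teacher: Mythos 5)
Your argument is correct and follows essentially the same route as the paper, which dispatches the corollary in one sentence ("If $S$ is a polyhedral cone then all inclusion-maximal faces have codimension one and hence $S$ is determined by $D(\x)$ alone"); your write-up just unpacks that sentence into the two inclusions $S \subseteq T$ and $T \subseteq S$ using the ``moreover'' clause of Corollary~\ref{cor:polyhedral_facets} and the fact that a full-dimensional polyhedral cone is the intersection of its facet-defining halfspaces.
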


We want to utilize Corollary~\ref{cor:polyhedral_facets} for computations but
the block-diagonal form~\eqref{eqn:block} is not canonical. This is due to the
fact that $Q(\x)$ might be further block-diagonalized giving additional linear
parts. The natural idea is to prevent this from happening.  Let us call a
matrix map $Q(\x)$ \defn{proper} if there is no $v \in \R^n$ such that $v$ is
an eigenvector of $Q(\p)$ for all $\p \in \R^d$. It is clear that if $Q(\x)$
is not proper, then there is an orthogonal matrix $U$ such that  $UQ(\x)U^t$
is block-diagonal with a block of order $1$.

\begin{dfn}
    A matrix map $A(\x)$ is in \defn{normal form} if
    \[
        A(\x) \ = \
        \begin{bmatrix}
            Q(\x) & \\
                  & D(\x)
        \end{bmatrix}
    \]
    with $Q(\x)$ proper and $D(\x)$ diagonal.
\end{dfn}

Thus for a spectrahedral cone $S$ with $A(\x)$ in normal form, we are
guaranteed to find all linear
forms defining codimension-one faces of $S$ among the linear forms in
$D(\x)$. In the rest of the section we will be concerned with the question of
how to compute the normal form. Let us start with an example where we can do
that by hand.

\begin{example}\label{ex:non-orth}
    The two dimensional spectrahedral cone given by
    \[
        A(x,y) \ = \
        x
        \begin{bmatrix}
            2 &   \\
              & 1 \\
        \end{bmatrix}
        \ + \
        y
        \begin{bmatrix}
              & 1 \\
            1 &   \\
        \end{bmatrix}
        \ \succeq \  0
    \]
    is the polyhedral cone generated by the two vectors $(1,\pm\sqrt{2})$.  A
    congruence that brings $A(x,y)$ into normal form 
    is given by
    \[
        M \ =  \
        \begin{bmatrix}
            \frac{1}{\sqrt{2}} & 1 \\
            \frac{1}{\sqrt{2}} & -1
        \end{bmatrix}.
    \]%
    The transformation $M$ is unique up to left-multiplication with
    $\mathrm{Diag}(a, b)$ and $a,b \in \R \setminus \{0\}$.  \hfill$\diamond$
\end{example}

The example shows that the congruence $M$ that brings $A(\x)$ into normal form
is not necessarily an orthogonal transformation and thus not directly related
to the eigenstructure of the matrices in the image of $A(\x)$.  It turns out
that we can assume that $M$ is orthogonal under an additional assumption.
As we will see, this is key to the computation of the normal form.   A matrix
map $A(\x)$ is \defn{unital} if $A(\p_0) = \id$ for some $\p_0 \in \R^{d}$.

\begin{prop}\label{prop:orthogonal}
    Let $A(\x)$ be a unital matrix map.  Then there is an orthogonal $n\times
    n$-matrix $U$ such that $UA(\x)U^T$ is in normal form.
\end{prop}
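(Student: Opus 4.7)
My plan is to start from the normal form already established in Corollary~\ref{cor:polyhedral_facets} and correct the (generally non-orthogonal) congruence $M \in \GL_n(\R)$ to an orthogonal matrix via a polar decomposition. The unital hypothesis enters precisely to guarantee that the symmetric factor of the polar decomposition respects the block partition of $MA(\x)M^T$, so that the correction does not mix blocks.

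Concretely, I would begin with any $M \in \GL_n(\R)$ for which
\[
    M A(\x) M^T \;=\;
    \begin{bmatrix} Q(\x) & \\ & D(\x) \end{bmatrix},
\]
with $D(\x)$ a diagonal matrix map, as provided by Corollary~\ref{cor:polyhedral_facets}. Evaluating at the unital point $\p_0$, where $A(\p_0) = \id$, yields
\[
    M M^T \;=\;
    \begin{bmatrix} Q(\p_0) & \\ & D(\p_0) \end{bmatrix},
\]
and both diagonal blocks are positive definite; moreover $D(\p_0)$ is a positive \emph{diagonal} matrix, since $D$ is a diagonal matrix map. Hence the Gram matrix $G := MM^T$ admits a unique positive definite square root $G^{1/2}$ that inherits the same block structure, with a diagonal bottom block.

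Next, I would set $U := G^{-1/2} M$, so that $M = G^{1/2} U$ and $UU^T = G^{-1/2} M M^T G^{-1/2} = \id$; that is, $U$ is orthogonal. Substituting,
\[
    U A(\x) U^T \;=\; G^{-1/2} \bigl( M A(\x) M^T \bigr) G^{-1/2}
    \;=\;
    \begin{bmatrix} Q(\p_0)^{-1/2} Q(\x) Q(\p_0)^{-1/2} & \\ & D(\p_0)^{-1/2} D(\x) D(\p_0)^{-1/2} \end{bmatrix}.
\]
The top block is a symmetric matrix map; the bottom block is the original diagonal matrix map $D(\x)$ conjugated by an invertible diagonal matrix, hence still a diagonal matrix map (with positively rescaled entries). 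This is exactly the form required by Corollary~\ref{cor:polyhedral_facets}, now realized by an orthogonal $U$.

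The main obstacle, conceptual rather than technical, is recognizing why the unital hypothesis is the right one: it forces $MM^T$ to inherit the block diagonal structure of $MA(\x)M^T$, which in turn makes $G^{1/2}$ and $G^{-1/2}$ compatible with the partition. Without this alignment, the symmetric factor of the polar decomposition would generically mix the two blocks and conjugation by $G^{-1/2}$ would destroy the desired form; with it in place, the verification that a diagonal matrix map stays diagonal under conjugation by a positive diagonal matrix is routine.
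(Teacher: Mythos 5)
Your proposal is correct and takes essentially the same approach as the paper: the paper also corrects the congruence $M$ by a block-diagonal factor chosen so that the corrected map sends $A(\p_0)=\id$ to $\id$, and then invokes uniqueness-up-to-orthogonal of such factors. The only cosmetic difference is that you realize the correction via the positive definite square root $G^{-1/2}$ of $G=MM^T$ (polar decomposition), whereas the paper uses blockwise Cholesky inverses $L_Q$, $L_D$ with $L_D$ diagonal; both choices are block-diagonal with a diagonal bottom block because $MM^T=\mathrm{diag}(Q(\p_0),D(\p_0))$, which is exactly where the unital hypothesis enters in each argument.
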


\begin{proof}
    If $A$ is a positive definite matrix, then, from a Cholesky decomposition,
    we get a matrix $L \in \R^{n \times n}$ such that $LAL^T = \id_n$. We call
    $L$ a Cholesky inverse of $A$.  It is unique up to left multiplication by an
    orthogonal matrix, i.e., if $L^\prime$ also satisfies the condition, then
    $(L^\prime)^{-1}L$ is orthogonal.

    Let $M$ be such that $MA(\x)M^T$ is in normal form and, since $Q(\p_0)$
    and $D(\p_0)$ are both positive definite, let $L_Q$ and $L_D$ be
    respective Cholesky inverses such that $L_D$ is diagonal. Now,
    \[
        L = \begin{bmatrix} L_Q & \\ & L_D \end{bmatrix} M
    \]
    also brings $A(\x)$ into normal form and is a Cholesky inverse for
    $A(\p_0)$. However, a Cholesky inverse
    for $A(\p_0) = \id_n$ is given by $L^\prime = \id_n$ and by the above
    remark, we see that $L = (L^\prime)^{-1}L$ is orthogonal.
\end{proof}

This result gives us a way to compute the normal form: For a unital matrix map
we seek the \defn{joint invariant subspace}, that is, the largest linear
subspace $\invSpc \subseteq \R^n$ such that for all $u \in \invSpc$ and all
$\p,\q \in \R^d$ we have $A(\p)u \in \invSpc$ (invariant subspace) and
$A(\p)A(\q)u = A(\q)A(\p)u$.  Indeed, $\invSpc$ is then the largest invariant
subspace restricted to which $A(\x)$ can be simultaneously diagonalized. This
will yield the diagonal part $D(\x)$.

At this point one could think that the joint invariant subspace of a unital
matrix map $A(\x)$ can be computed by diagonalizing a single (generic) element
$A(\p)$. That this is unfortunately not the case is the content of the
next example.

\begin{example}
    The spectrahedral cone $S$ given by
    \[
        A(x,y,t) \ = \
        \begin{bmatrix}
          t & x & y &   \\
          x & t &   &   \\
          y &   & t &   \\
            &   &   & t \\
        \end{bmatrix} \ \succeq \ 0
    \]
    is the redundant intersection of the \emph{second order cone} $\bset{
    (x,y,t)}{ t \ge 0, t^2 \ge x^2+y^2}$ and the halfspace $\bset{ (x,y,t) }{t
    \ge 0}$. 
    
    The matrix map is unital ($A(0,0,1) = \id$) and we claim that $A(x,y,t)$
    is already in normal form. Let $Q(x,y,t)$ be the principal submatrix given
    by the first three rows and columns. We need to argue that $Q(x,y,t)$ is
    proper. To this end, let $B_1 = A(1,0,0)$ and $B_2 = A(0,1,0)$. It is
    easily seen that all eigenspaces of $B_1$ and $B_2$ are one dimensional
    and that no two eigenspaces intersect non-trivially. Hence, there is no
    $v\neq 0$ that is an eigenvector for both $B_1$ and $B_2$. If $Q(x,y,t)$
    was not proper, then such a common eigenvector would exist.
    
    Up to scaling, the only eigenvector for all specializations of $A(x,y,t)$
    is $(0,0,0,1)$ with eigenvalue $\lambda = t$. But for each specialization,
    the eigenspace for $\lambda = t$ is of dimension $\ge 2$. Hence,
    $(0,0,0,1)$ is not a distinguished basis vector for the eigenspace
    corresponding to $\lambda = t$. It is therefore not possible to check if
    $A(x,y,t)$ is in normal by analyzing a (generic) point in the image.
    \hfill$\diamond$
\end{example}

The next result shows that the joint invariant subspace of $A(\x)$ can be
computed from \emph{two} generic points in the image. Here \emph{generic
points} refer to points not satisfying a certain polynomial condition (that is
implicitly given in the proof).

\begin{thm}\label{thm:calc_example} 
    Let $A(\x)$ be a unital matrix map and let $\p,\q \in \R^d$ be two
    distinct generic points. Let $\invSpc \subset \R^n$ the smallest subspace
    containing all eigenvectors common to $A(\p)$ and $A(\q)$. Then
    $\invSpc$ is invariant under any matrix in the image of $A(\x)$ and
    $\invSpc^\perp$ is the largest invariant subspace on which $A(\x)$
    restricts to a proper matrix map.
\end{thm}
\begin{proof}
    Let us assume that $A(\x)$ is already in normal form. Then the joint
    invariant subspace $\invSpc$ of $A(\x)$ can be directly read off and we
    have to show that $A(\p)$ and $A(\q)$ do not have a common eigenvector
    outside
    $\invSpc$. That is, we have to consider the situation when $Q(\p)$ and
    $Q(\q)$ have a common eigenvector.
    
    The set $\mathcal{V} \subset (\mathbb{C}^{n \times n})^2$ of pairs of
    matrices $(B_1,B_2)$ such that $B_1$ and $B_2$ have a common eigenvector
    is an algebraic variety. Hence, $\mathcal{V}$ is nowhere dense and any
    generic pair of matrices will fail to be in $\mathcal{V}$. To see that it
    is an algebraic variety, we can argue that the set of tuples
    $(B_1,\lambda_1,B_2,\lambda_2,v)$ where $v$ is an eigenvector of $B_1$ and
    $B_2$ with eigenvalue $\lambda_1$ and $\lambda_2$ respectively is clearly
    a projective algebraic variety. Using elimination theory
    (cf.~\cite[Ch.~3]{clo}) we can project onto $(B_1,B_2)$.
    The result is a proper subvariety of $(\mathbb{C}^{n \times n})^2$ that is
    equal to $\mathcal{V}$. Since $Q(\x)$ is proper, it follows that the
    image of $Q(\x)$ meets $\mathcal{V}$ in a nowhere-dense set.

    Alternatively, we can appeal to Theorem~\ref{thm:inv_spc} below: There is
    an eigenvector common to both $Q(\p)$ and $Q(\q)$ if and only if
    \[
        \bigcap_{i,j=1}^n \ker [Q(\p)^i,Q(\q)^j]  \ \neq \ \{0\}.
    \]
    Writing out this condition states that a certain matrix with entries being
    polynomials in $\p$ and $\q$ does not have full rank. This, in turn, can
    be checked by calculating a determinant which then a non-zero polynomial
    in the entries of $\p$ and $\q$. For generic $\p$ and $\q$ this
    determinant does not vanish.
\end{proof}

\section{The algorithm}
\label{sec:algorithm}

In this section we describe an algorithm for recognizing polyhedrality of a
spectrahedral cone
\[
    S \ = \ \bset{\x \in \R^d}{A(\x) \ \succeq \  0}
\]
where $A(\x)$ is a linear, symmetric matrix map of order $n$. As already
stated in the introduction, the algorithm consists of two steps: An
`approximation' step that constructs an outer polyhedral approximation
$\approx{S}$ from the matrix map $A(\x)$ that coincides with $S$ whenever $S$
is polyhedral. This is then verified in the `containment' step.

For the approximation step note that if $A(\x)$ is in normal form, then $S$ is
presented as the intersection of a spectrahedron without codimension one faces
and a polyhedron (both of which can be trivial).

\begin{prop}
    Let $S = \bset{\x \in \R^d}{A(\x) \succeq 0}$ be a full-dimensional
    spectrahedral cone with 
    \[
        A(\x) \ = \ 
        \begin{bmatrix}
            Q(\x) &  \\
                  & D(\x) \\
        \end{bmatrix}
    \]
        in normal form. Then $\approx{S} =
    \bset{\x}{D(\x) \ge 0}$ is a polyhedral cone with $S \subseteq \approx{S}$.
\end{prop}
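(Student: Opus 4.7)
The proposition is essentially an unpacking of what it means for the block-diagonal matrix $A(\x)$ to be positive semidefinite, so the plan is to verify each claim directly from the block structure.

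First I would observe that for a block diagonal symmetric matrix
\[
A(\x) \;=\; \begin{bmatrix} Q(\x) & \\ & D(\x) \end{bmatrix},
\]
positive semidefiniteness decouples: $A(\x) \succeq 0$ if and only if $Q(\x) \succeq 0$ and $D(\x) \succeq 0$. This is immediate from writing any test vector $v \in \R^n$ in the form $v = (v_Q, v_D)$ conforming to the block partition and noting that $v^T A(\x) v = v_Q^T Q(\x) v_Q + v_D^T D(\x) v_D$. In particular, any $\x \in S$ satisfies $D(\x) \succeq 0$, which gives the inclusion $S \subseteq \widehat{S}$.

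Next, since $D(\x)$ is a diagonal matrix map of some order $m$, write $D(\x) = \mathrm{Diag}(\ell_1(\x), \dots, \ell_m(\x))$ where each $\ell_i$ is a linear form in $\x$. A diagonal symmetric matrix is positive semidefinite precisely when each diagonal entry is non-negative, so
\[
\widehat{S} \;=\; \bset{\x \in \R^d}{\ell_i(\x) \ge 0 \text{ for } i = 1, 2, \dots, m},
\]
which is a (finite) intersection of linear halfspaces and hence a polyhedral cone. If $m = 0$, then $\widehat{S} = \R^d$, which is still polyhedral, and the inclusion $S \subseteq \widehat{S}$ holds trivially.

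There is no real obstacle here; the statement is a direct consequence of the block-diagonal decomposition guaranteed by the normal form. The substantive content, namely that such a decomposition exists and isolates the codimension-one faces of $S$, has already been established in Corollary~\ref{cor:polyhedral_facets} and the surrounding discussion.
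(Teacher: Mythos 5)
Your proof is correct and matches what the paper leaves implicit: the proposition is stated with no written proof (just a $\square$), since positive semidefiniteness of a block-diagonal matrix decouples into the blocks, and the diagonal block yields finitely many linear inequalities. Your filled-in verification is exactly the intended argument.
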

\begin{proof} %
    Let $\p \in S$ be a point. By definition, if $A(\p)$ is
    positive semidefinite then $A(\p)_{ii} \ge 0$ for all $i$.  In
    particular, $D(\p)_{ii} \ge 0$ for all $i$ which implies that $\p \in
    \approx{S}$.
\end{proof}

Towards a procedure to bring $A(\x)$ into normal form, we need to ensure that
$S$ is full-dimensional and $A(\x)$ of full rank.
Lemma~\ref{lem:face_subspace} implies that faces of the PSD cone are
embeddings of lower-dimensional PSD cones into subspaces parametrized by
kernels. Recall that the \defn{linear hull} $\lin(C)$ of a convex cone $C$ is
the intersection of all linear spaces containing $C$ and $C$ is
full-dimensional relative to $\lin(C)$.

\begin{prop}[{\cite[Cor.~5]{RG95}}]
    Let $S = \bset{\x}{A(\x) \succeq 0}$ be a spectrahedral cone and let $\p
    \in \relint\,S$ a point in the relative interior. Then the linear hull of
    $S$ is given by
    \[
        \lin(S) \ = \ \bset{ \x \in \R^d }{ \ker A(\p) \subseteq \ker\,A(\x) }.
    \]
    If $\bar{A}(\x)$ is the restriction of $A(\x)$ to
    $(\ker\,A(\p))^\perp$, then
    \[
        S \ = \ \bset{\x \in \lin(S)}{\bar{A}(\x) \ \succeq \ 0 }
    \]
    and $\bar{A}(\p) \succ 0$.
\end{prop}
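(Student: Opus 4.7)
The plan is to establish the two directions of the kernel characterization of $\lin(S)$ separately, and then to read off the other claims from a block decomposition induced by $\ker A(\p)$.

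First I would prove the inclusion $\lin(S) \subseteq \{\x : \ker A(\p) \subseteq \ker A(\x)\}$. The key observation is that for any $\q \in S$ and any $v \in \ker A(\p)$, the scalar $v^T A(\q) v$ is nonnegative (since $\q \in S$), while the point $\p' := \p + \epsilon(\p - \q)$ also lies in $S$ for sufficiently small $\epsilon > 0$ because $\p \in \relint S$, which gives $v^T A(\p') v = (1+\epsilon)\,v^T A(\p) v - \epsilon\, v^T A(\q) v = -\epsilon\, v^T A(\q) v \ge 0$. Hence $v^T A(\q) v = 0$, and since $A(\q) \succeq 0$ this forces $A(\q) v = 0$. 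Now $\lin(S) = S - S$ (cones contain all positive scalings of their elements, so the linear span equals the Minkowski difference), and $A$ is linear, so $\ker A(\p) \subseteq \ker A(\x)$ for every $\x \in \lin(S)$.

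For the reverse inclusion, suppose $\x \in \R^d$ satisfies $\ker A(\p) \subseteq \ker A(\x)$. Decomposing $\R^n = \ker A(\p) \oplus (\ker A(\p))^\perp$ and using that $A(\x)$ is symmetric, the kernel condition forces $A(\x)$ to have the block form
\[
    A(\x) \;=\; \begin{bmatrix} 0 & 0 \\ 0 & \bar{A}(\x) \end{bmatrix},
\]
while $A(\p)$ is the block $\mathrm{diag}(0, \bar{A}(\p))$ with $\bar{A}(\p) \succ 0$ (this is exactly the defining property of the kernel of a PSD matrix, which also yields the final claim of the proposition). Therefore $A(\p \pm t\x) = \mathrm{diag}(0, \bar{A}(\p) \pm t\bar{A}(\x))$ is PSD for all sufficiently small $t > 0$, so $\p \pm t\x \in S$ and consequently $\x = \tfrac{1}{2t}((\p + t\x) - (\p - t\x)) \in \lin(S)$.

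The two representations of $S$ now follow immediately from the block decomposition: for $\x \in \lin(S)$ we have $A(\x) = \mathrm{diag}(0, \bar{A}(\x))$, so $A(\x) \succeq 0$ if and only if $\bar{A}(\x) \succeq 0$. I expect the only delicate point to be the very first step, namely being careful that $\p \in \relint S$ really does allow one to push from $\p$ slightly past $\p$ away from an arbitrary $\q \in S$; once that geometric fact is in hand, everything else is linear algebra on the block decomposition induced by $\ker A(\p)$.
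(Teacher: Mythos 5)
Your proof is correct. The paper itself does not give a proof of this proposition---it is cited directly from~\cite[Cor.~5]{RG95}---so there is no in-paper argument to compare against. Your argument is the standard and natural one: the forward inclusion uses the ``push past $\p$'' trick (valid because $\p \in \relint S$ and $\p - \q \in \lin(S) = \mathrm{aff}(S)$), together with the fact that $v^T B v = 0$ forces $Bv = 0$ for $B \succeq 0$ and the identity $\lin(S) = S - S$ for a convex cone containing the origin; the reverse inclusion and the two remaining claims all fall out of the block decomposition $A(\x) = \mathrm{diag}(0, \bar{A}(\x))$ induced by $\ker A(\p)$, where symmetry of $A(\x)$ is what kills the off-diagonal blocks. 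No gaps.
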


In concrete terms this means that if $M$ is a basis for the kernel of $A(\p)$
at a relative interior point $\p \in S$, then $\lin(S)$ is the kernel for all
points in the image of $MA(\x)M^T$. The map $\bar{A}(\x)$ is given by $M_0 A(\x) M_0^T$ up to a
choice of basis $M_0$ for the orthogonal complement of $\ker\,A(\p)$. Since
$\bar{A}(\p)$ is positive definite, we can choose $M_0$ so that $\bar{A}(\p) =
\id$ and hence is unital. This, for example, can be achieved by taking
advantage of the Cholesky decomposition. By choosing a basis $B$ for 
$\lin(S)$, we identify $\lin(S) \cong \R^k$ for $k=\dim S$ which insures that
$S \subset \R^k$ is full-dimensional.
The resulting spectrahedral cone
\[
    \bar{S} \ = \ \bset{ \z \in \R^k}{ \bar{A}(B\z) \ \succeq \ 0 }
\]
is linearly isomorphic to $S$ (via $B$).

In actual computations, a point in the relative interior of $S$ may be found
by interior point algorithms. In case the spectrahedral cone $S$ is
\emph{strictly feasible}, i.e., a point $\p\in \R^d$ with $A(\p) \succ 0$
exists, an interior point algorithm finds a point arbitrarily close to the
analytic center of a suitable dehomogenization of $S$. Viewed as a linear
section of the cone of positive semidefinite matrices, $S$ is not strictly
feasible, if the linear subspace only meets the boundary of $\{X \succeq 0\}$.
These are subtle but well-studied cases in which techniques from semidefinite
and cone programming such as self-dual embeddings~\cite[Ch.  5]{WoSaVa00},
facial reduction~\cite{Borwein1980}, or an iterative procedure analogous
to~\cite[Remark. 4.15]{LLR07} can be used to obtain a point $\p \in
\relint\,S$. Independent of the chosen strategy, the computation of a point
$\p \in \relint\,S$ is potentially numerically delicate and has to be handled
with care.  For the purpose of this paper, we will simply follow the first
approach as detailed in the implementation remarks below. After applying the
above procedure and possibly after a change of basis and a transformation of
the matrix map $A(\x)$ we may assume that the spectrahedral cone is indeed
full-dimensional and described by a unital matrix map.

Utilizing Theorem~\ref{thm:calc_example}, we compute the normal form of
the unital matrix map $A(\x)$ by determining an orthonormal basis for the
joint invariant subspace $\invSpc$.  The joint invariant subspace is given as
the smallest subspace containing all eigenvectors common to matrices $A(\p)$
and $A(\q)$ for generically chosen $\p,\q\in \R^d$. It can be computed either
by pairwise intersecting eigenspaces of $A(\p)$ and $A(\q)$ or, somewhat more
elegantly, by employing the following result followed by a diagonalization
step.

\begin{thm}[{\cite[Thm.~3.1]{shemesh84}}] \label{thm:inv_spc}
    Let  $A$ and $B$ be two symmetric matrices. Then the smallest
    subspace containing all common eigenvectors is given by
    \[
        \invSpc \ = \ \bigcap_{i,j = 1}^{n-1} \ker\,[A^i,B^j].
    \]
    where $[A,B] = AB - BA$ is the commutator.
\end{thm}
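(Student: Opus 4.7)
Write $\invSpc = \bigcap_{i,j=1}^{n-1} \ker[A^i, B^j]$ and let $\mathcal{E} \subseteq \R^n$ denote the span of all common eigenvectors of $A$ and $B$; the claim is $\invSpc = \mathcal{E}$.

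The inclusion $\mathcal{E} \subseteq \invSpc$ is immediate: if $v$ is a common eigenvector with $Av = \alpha v$ and $Bv = \beta v$, then $A^i B^j v = \alpha^i \beta^j v = B^j A^i v$, so $v$ lies in $\ker[A^i, B^j]$ for every $i, j$.

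For the reverse inclusion, the crux is to show that $\invSpc$ is invariant under both $A$ and $B$. Once this is in hand, taking $i = j = 1$ in the defining intersection shows that $A$ and $B$ commute on $\invSpc$; being restrictions of real symmetric operators to an invariant subspace, $A|_\invSpc$ and $B|_\invSpc$ are themselves symmetric with respect to the induced inner product, and commuting symmetric operators are simultaneously diagonalizable. Hence $\invSpc$ admits a basis of vectors that are simultaneously eigenvectors of $A|_\invSpc$ and $B|_\invSpc$, and therefore of $A$ and $B$; this gives $\invSpc \subseteq \mathcal{E}$.

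To prove $A$-invariance of $\invSpc$, I would use the commutator identity
\[
    [A^i, B^j]\,A \;=\; [A^{i+1}, B^j] \;-\; A^i\,[A, B^j],
\]
which follows by writing $A B^j = B^j A + [A, B^j]$ and rearranging. Applied to $v \in \invSpc$ with $1 \le i, j \le n-1$, both terms on the right annihilate $v$: the second because $[A, B^j] v = 0$ is part of the defining intersection; the first directly when $i + 1 \le n - 1$, and when $i + 1 = n$ by Cayley--Hamilton. Specifically, $A^n$ is a linear combination of $\id, A, \dots, A^{n-1}$, so $[A^n, B^j]$ is a linear combination of $[\id, B^j] = 0$ and of the commutators $[A^k, B^j]$ for $1 \le k \le n-1$, each of which kills $v$. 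The same argument with the roles of $A$ and $B$ exchanged yields $B$-invariance.

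The main obstacle is precisely this invariance step: a naive commutator manipulation does not close, and one needs both the identity above to relate $[A^i, B^j] A$ to the defining relations and the Cayley--Hamilton extension from index $n - 1$ to index $n$ to handle the boundary case $i + 1 = n$. Everything else is routine from simultaneous diagonalizability of commuting real symmetric matrices.
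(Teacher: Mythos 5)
The paper invokes this result by citation to Shemesh and supplies no proof, so there is nothing in-source to compare against. Your argument is nevertheless correct and complete, and it is essentially the standard route (also Shemesh's own): the inclusion $\mathcal{E}\subseteq\invSpc$ is immediate from the eigenvalue computation; the reverse inclusion hinges on showing $\invSpc$ is $A$- and $B$-invariant, which you establish via the identity $[A^i,B^j]A=[A^{i+1},B^j]-A^i[A,B^j]$ together with Cayley--Hamilton to handle the boundary index $i+1=n$; and then the restrictions $A|_\invSpc$, $B|_\invSpc$ are symmetric, commute (take $i=j=1$), and hence are simultaneously diagonalizable, yielding a basis of $\invSpc$ consisting of common eigenvectors. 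The one point worth stating explicitly (you rely on it implicitly) is that an eigenvector of the restriction $A|_\invSpc$ is an eigenvector of $A$ itself, which is exactly why the invariance of $\invSpc$ is needed before invoking simultaneous diagonalization; with that small clarification the proof is airtight.
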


These techniques originate from the theory of finite dimensional $C*$-algebras
and have been used in block-diagonalizations of semidefinite programs; see
\cite{deKDP09,Kojima2010}.  After all $(n-1)^2$ commutators have been
computed, the intersection of their kernels can be computed effectively by
means of simple linear algebra.  By Theorem~\ref{thm:calc_example}, the
restriction of $A(\x)$ to $\invSpc$ is a map of pairwise commuting matrices,
there is an orthogonal transformation $U$ such that
\[
    U A(\x) U^T = \begin{bmatrix}Q(\x)& \\ &D(\x)\end{bmatrix}
\]
has the desired normal form with $Q(\x)$ proper and $D(\x)$ diagonal. The
outer polyhedral approximation of $S$ obtained from $A(\x)$ is given by
\[
    \approx{S} \ = \ \bset{ \x \in \R^d }{ D(\x) \ge 0 }.
\]
It remains to check that $\approx{S} \subseteq S$.  While deciding containment
of general (spectrahedral) cones is difficult, we exploit here the finite
generation of polyhedral cones.

\begin{thm}[{\cite[Thm.~1.3]{ziegler95}}] For every polyhedral cone
    $C$ there is a finite set $R = R(C) \subseteq C$
    such that
    \[
    C \ = \ \Bset{ \sum_{\r \in R} \lambda_\r \r }{ \lambda_\r \ge 0 \text{ for
    all } \r \in R }.
    \]
\end{thm}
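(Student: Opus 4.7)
The statement is the classical Minkowski--Weyl theorem: every polyhedral cone is finitely generated. Since this is a cornerstone result that the authors cite directly from Ziegler, my plan is not to present a novel argument but to outline the standard route, which proceeds by induction on the ambient dimension $d$. First, I would reduce to the pointed case: if $C$ contains a line, let $L$ be its lineality space and $C' = C \cap L^\perp$; then $C = L + C'$ with $C'$ pointed in $L^\perp$, and a basis of $L$ together with its negatives yields finitely many generators for $L$ as a cone, so it suffices to generate $C'$.

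For a pointed polyhedral cone $C = \{x \in \R^d : a_i^T x \ge 0,\; i = 1,\ldots,m\}$, I would define candidate generators combinatorially. For each subset $I \subseteq \{1,\ldots,m\}$ of size $d-1$ such that $\{a_i\}_{i \in I}$ is linearly independent, the intersection $H_I = \bigcap_{i \in I} \{x : a_i^T x = 0\}$ is a line, and at most one of its two rays lies in $C$. Let $R$ be the resulting finite collection of such rays. The inclusion $\cone(R) \subseteq C$ is immediate. For the reverse, I would argue by reverse induction on the number of linearly independent active constraints at a point $x \in C$: if this number is already $d-1$, then $x$ lies on a ray in $R$; otherwise, one can move inside the intersection of the currently active hyperplanes until an additional independent constraint becomes tight, and pointedness ensures that such a direction $v$ yields a decomposition $x = x' + \lambda r$ with $x' \in C$ having strictly more independent active constraints and $r \in R$. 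Iteration terminates with $x$ expressed as a non-negative combination of elements of $R$.

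An alternative and arguably cleaner plan is Fourier--Motzkin elimination: projecting the inequality system along $x_d$ yields an inequality description of the projection of $C$ onto $\R^{d-1}$, which by induction is finitely generated; the preimage in $C$ of each projected generator is again polyhedral of dimension at most one more, from which generators for $C$ can be assembled. The main obstacle in either plan is the careful bookkeeping of degeneracies---multiple inequalities meeting on a single ray, rays coinciding with the lineality space, or active sets failing to attain rank $d-1$. These are not conceptual hurdles but do complicate a rigorous write-up, which is presumably why the authors defer entirely to \cite{ziegler95}.
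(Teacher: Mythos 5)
The paper does not prove this statement at all: it is quoted verbatim as \cite[Thm.~1.3]{ziegler95} (the Minkowski--Weyl representation theorem) and used as a black box, so there is no ``paper's own proof'' for your outline to diverge from. Your two sketches are both recognizable and essentially correct accounts of the standard arguments. The lineality-space reduction $C = L + (C \cap L^\perp)$ followed by the enumeration of rank-$(d-1)$ active sets and a descent/augmentation argument on the number of independent tight constraints is the classical ``basic feasible direction'' proof. Your second sketch, via Fourier--Motzkin elimination, is closer to how Ziegler actually organizes the proof in the cited reference: he uses Fourier--Motzkin projection to pass between $\mathcal{H}$- and $\mathcal{V}$-descriptions, deriving the cone statement from a general polyhedron/polytope version. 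One small caution on the first sketch: when you ``move inside the intersection of the currently active hyperplanes until an additional independent constraint becomes tight,'' pointedness is what guarantees such a bounding constraint exists (otherwise you could escape to infinity in that affine slice), and you should make explicit that the direction of travel can be chosen so that the ray you split off, not merely its line, actually lies in $C$; these are exactly the degeneracy issues you flag at the end, and they are where a fully rigorous write-up spends its effort. As a blind reconstruction of a cited classical theorem, though, your proposal is sound.
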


Thus, if $R(\approx{S}) \subseteq S$, we infer that $\approx{S} \subseteq S
\subseteq \approx{S}$ and hence $S$ is polyhedral. Let us remark that
computationally expensive polyhedral computations may be avoided by inspecting
the lineality spaces of $S$ and $\approx{S}$ first.  The lineality space of
$S$, i.e.\ the largest linear subspaces contained in $S$, is given by
by the kernel of the linear map $A(\x)$. The complete procedure is
given in Algorithm~\ref{alg::preprocessing}. As a certificate the algorithm returns
the collection of generators $R(\approx{S})$. As we assume that $A(\x)$ is in
normal form, is can be easily checked if $S$ is polyhedral or not.

\begin{algorithm*}[htb]
    \caption{\emph{Recognizing polyhedrality of a spectrahedral cone}}
    \begin{algorithmic}[1]
        \Require Spectrahedral cone $S = \bset{\x \in \R^d}{A(\x)\succeq 0}$
        given by a linear matrix map $A(\x)$.

        \State Generate  point $\a \in \R^d$ in the relative interior of $S$.

        \State Compute unital matrix map $\bar{A}(\z)$ of order $m$ and
        linear isomorphism $B$ such that
        \[
            S \ = \ \bset{B\z}{\bar{A}(\z) \ \succeq \ 0}.
        \]

        \State Determine the joint invariant subspace $\invSpc = \bigcap_{i,j
        = 1}^{n-1} \ker\,[\bar{A}(\p)^i,\bar{A}(\q)^j]$ for two generic points $\p,\q \in
        \R^k$.

        \State Compute an orthonormal basis $U$ corresponding to the
        decomposition $\R^k
        = \invSpc^\perp \oplus \invSpc$ and compute
        \[
        U\bar{A}(\z)U^T \ = \begin{bmatrix}
            Q(\z) & \\
            & D^\prime(\z) \\
        \end{bmatrix}.
        \]

        \State Obtain diagonal map $D(\z) = V D^\prime(\z) V^T$ via an
        orthogonal transformation that diagonalizes 
        $D^\prime(\p)D^\prime(\q)$.

        \State Compute the extreme rays $R = R(\approx{S})$ of the polyhedral cone
        \[
            \approx{S} \ = \ \bset{ \z \in \R^k }{ D(\z)_{ii} \ge 0 \text{ for
            all } i = 1,\dots, \dim \invSpc}
        \]

        \State $S$ is polyhedral if and only if  $Q(\r) \succeq 0$ for all $\r
        \in R$.
\end{algorithmic}
\label{alg::preprocessing}
\end{algorithm*}

\subsection*{Implementation details.} The algorithm is implemented in Matlab
using the free optimization package Yalmip~\cite{yalmip} and is available as
part of the convex algebraic geometry toolbox 
\emph{Bermeja}~\cite{bermeja}. The SDP solver chosen for the computation of an
interior point is SeDuMi~\cite{St99}, which implements a self dual embedding
strategy and is thus guaranteed to find a point in the relative interior, even
if the spectrahedral cone is not full-dimensional. Extreme rays of $\approx{S}$
are computed using the software \texttt{cdd/cddplus}~\cite{cdd}.

In order to illustrate the algorithm, we consider the following example
involving a variant of the elliptope $\mathcal{E}_3$ (also known as the
``Samosa''), cf.~\cite{lp96}.

\begin{example}
The spectrahedral cone $S = \{\x \in \R^4 : A(\x) \succeq 0\}$ with
\begin{align*}
A(\x) = \small \begin{bmatrix} 4x_4  & 2x_4+2 x_1 &  2x_4  &  0  & 2 x_3\\
       2x_4+2 x_1 & 2x_4+2 x_1 & x_4+ x_1 &  0  &  x_3+ x_2\\
        2x_4   & x_4+ x_1  & 2x_4+ x_1 &  x_3- x_2 &   x_3\\
        0   &   0  &  x_3- x_2 & x_4+ x_1 &  0\\
       2 x_3   &  x_3+ x_2  &   x_3  &  0  &  x_4\end{bmatrix}.
\end{align*}
is to be analyzed.  Since the spectrahedral cone in context is
full-dimensional and $A(\x)$ is of full rank, i.e.\ $A(\p) \succ 0$ with
$\p=(0,0,0,1)$, the algorithm proceeds by first making the matrix
map unital. This is facilitated by applying the Cholesky inverse, computed at
the interior point $\p$. The congruence transformation $U$, thus obtained
yields the unital matrix map $\bar{A}(\z)$, allowing the use of orthogonal
transformations thereafter. 

The next step involves separating the invariant subspace from its orthogonal
complement. This step is carried out using Theorem~\ref{thm:inv_spc}, by means
of computing all commutator matrices and then intersecting their kernel. The
following step involves (simultaneous) diagonalization of the commuting part
of the matrix (here the lower right $2\times 2$ block) in order to arrive at
the desired normal form. This transformation matrix $V$ may be computed by
diagonalizing any generic matrix in the image, restricted to the commuting
part. The corresponding unital matrix map $\textstyle UA(\x)U^T$ and its
normal form $\textstyle MA(\x)M^T$ with $\textstyle M =\small
\begin{bmatrix}
 I & \\
      &V
\end{bmatrix}U$ are depicted below:
\begin{figure}[h!]
\centering
\subfigure{
\scriptsize
$
UA(\x)U^T = \begin{bmatrix}
    x_4 &  x_1 &  x_3 & 0 & 0\\
     x_1 & x_4 &  x_2 & 0 & 0\\
     x_3 &  x_2 & x_4 & 0 & 0\\
    0 & 0 & 0 & x_4+ x_1 &  x_3- x_2\\
    0 & 0 & 0 &  x_3- x_2 & x_4+ x_1
\end{bmatrix},
$
}
\subfigure{
\scriptsize
$
MA(\x)M^T =
\begin{bmatrix}
     x_4 &  x_1 &  x_3 &    0    &    0\\
      x_1 & x_4 &  x_2 &    0    &    0\\
      x_3 &  x_2 & x_4 &    0    &    0\\
     0 & 0 & 0 & x_4+ x_3- x_2+ x_1 &    0\\
     0 & 0 & 0 &    0    & x_4- x_3+ x_2+ x_1
\end{bmatrix}
$
}
\end{figure}

The normal form clearly shows that the spectrahedral cone has two polyhedral faces.
\begin{figure}[h!]
\centering
\subfigure{
  \includegraphics[scale=0.3]{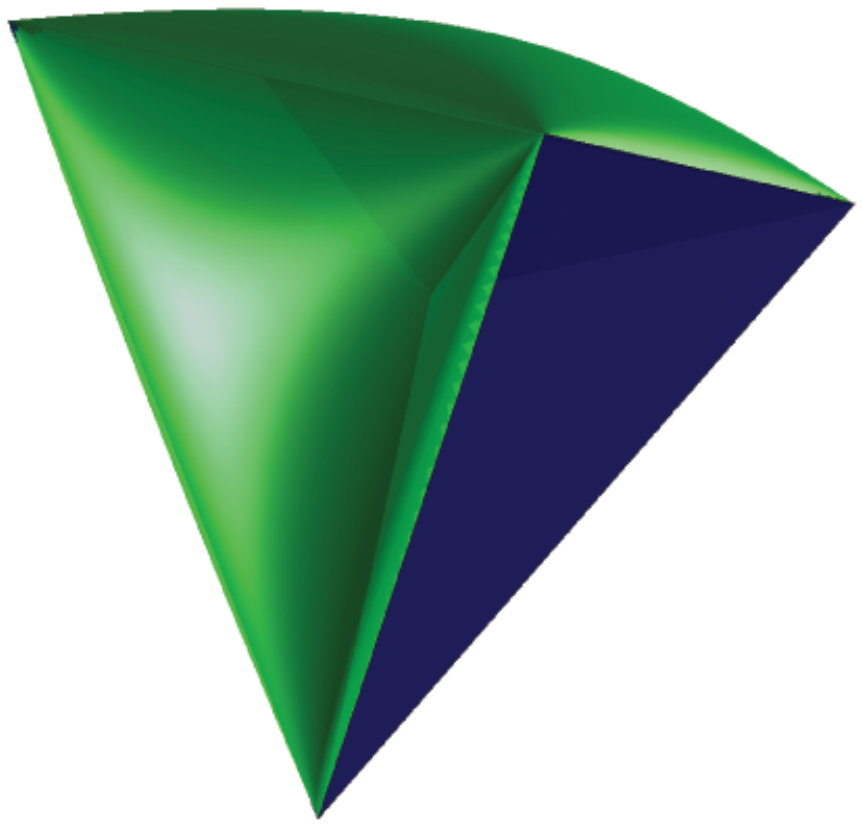}
}
\hspace{1cm}
\subfigure{
  \includegraphics[scale=0.3]{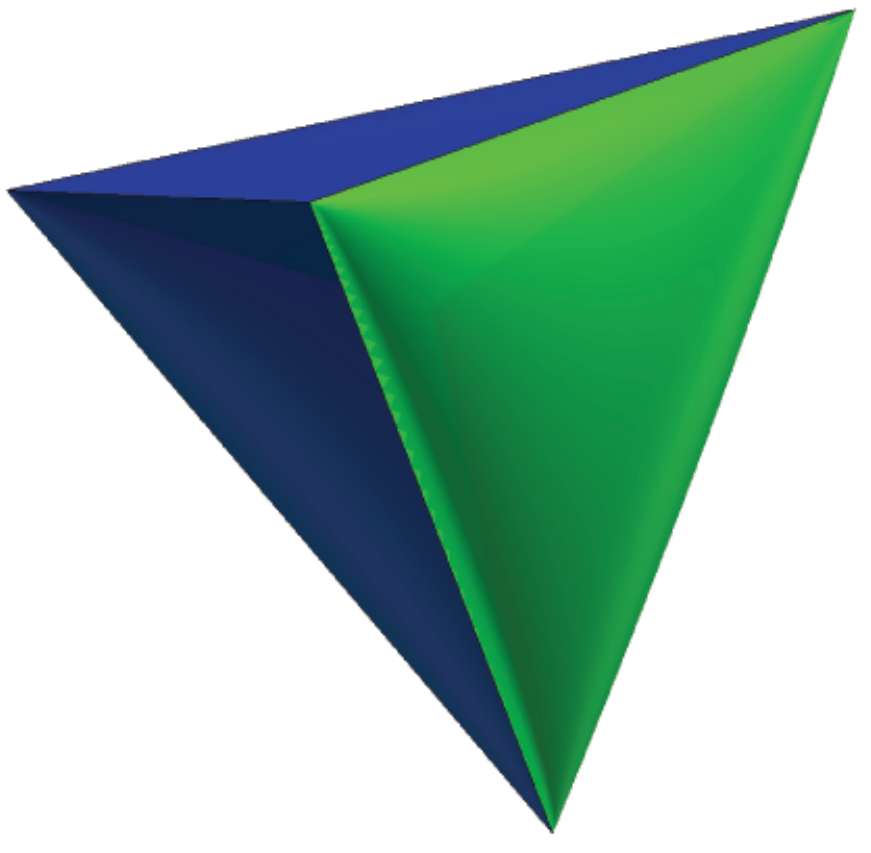}
}
\caption{Dehomogenization $\affine{S}$ (at $x_4=1$), of the spectrahedral cone $S$}\label{fig:specplot}
\end{figure}
The algorithm eventually terminates by confirming existence of a lineality
space in the corresponding polyhedral cone, even though the initial
spectrahedral cone was pointed.  This ensures the non-polyhedrality of $S$.
Figure~\ref{fig:specplot} shows a dehomogenization ($x_4 = 1$) of $S$ with its
two polyhedral facets.
\end{example}

\subsection*{A word about complexity}
Calculating the joint invariant subspace of a matrix map by way of
Theorem~\ref{thm:calc_example} requires the generation of two generic
points. In practice picking random points works very well but it is not
guaranteed to give generic points. Alternatively Theorem~\ref{thm:inv_spc}
can be used to compute the joint invariant subspace $\invSpc_{ij}$ of $A_i$
and $A_j$ for all $i < j$ and then take $\invSpc = \bigcap_{ij} \invSpc_{ij}$.
Either way, calculating the joint invariant subspace of a matrix map can be
done in polynomial time. The transformation of $A(\x)$ to an unital matrix map
is more involved.  The following example, adapted from
\cite[Example~23]{ramana97}, shows that any such procedure may involve numbers
with doubly-exponential bit complexity.

\begin{example}\label{ex:complexity}
    Consider the family of spectrahedral cones
    \[
        S_i \ = \
            \Bset{\x \in \R^{d+1}}{
                \begin{bmatrix}
                    x_{i+1} & 2 x_i \\
                    2 x_i & x_{0}
                \end{bmatrix} \ \succeq \ 0
            } \ = \
            \Bset{\x \in \R^{d+1}}{
            \begin{array}{c@{\;\;\ge\;\;}l}
                x_0 & 0 \\
                x_0x_{i+1} & 4x_i^2
            \end{array} }
    \]
    for $i = 0,\dots,d-1$.  The intersection $S = S_0 \cap S_1 \cap \cdots
    \cap S_{d-1}$ is strictly contained in the cone $\{\x \in \R^{d+1} : x_i
    \ge 2^{2^{i-1}} x_0\}$.  Denote by $A(\x)$ the matrix map for $S$.  Now
    assume that $B(\x)$ is a matrix map for $S$ such that $B(\p) = \id$ for
    some $\p \in \int S$.  Then $B(\x) = UL\,A(\x)\,(UL)^T$ where $L$ is the
    Cholesky inverse of $A(\p)$ and $U$ is an orthogonal matrix.  Denote by
    $\mathbf{l} = (QL)_1$ the first column of $QL$.  From the definition of
    the Cholesky decomposition we infer that $0 < \|\mathbf{l} \|^2 = L^2_{11}
    = \frac{1}{\p_n}$ has doubly-exponential bit complexity and hence for $\q
    = (1,0,\dots,0)$, we have that $B(\q) = \mathbf{l}\mathbf{l}^T$ has
    doubly-exponential bit complexity.\hfill$\diamond$
\end{example}

We currently do not know if in the computation of the normal form, the
unital matrix map can be avoided.

\bibliographystyle{siam}
\bibliography{SpectrahedraAndPolyhedra}

\end{document}